\documentclass[11pt]{amsart}
\usepackage[T1]{fontenc}
\usepackage{amsmath,amssymb,amsthm}
\usepackage{booktabs}
\usepackage[hidelinks]{hyperref}

\newtheorem{theorem}{Theorem}[section]

\newtheorem{lemma}[theorem]{Lemma}

\theoremstyle{definition}
\newtheorem{question}[theorem]{Question}
\newtheorem{remark}[theorem]{Remark}

\newcommand{\Ball}[1]{B_{#1}}

\newcommand{\GOne}{G_1}
\newcommand{\GTwo}{G_2}
\newcommand{\GThree}{G_3}
\renewcommand{\P}{\mathrm{P}}
\newcommand{\Z}{\mathbb{Z}}
\newcommand{\F}{\mathbb{F}}

\title[The Nielsen--Soelberg domain question]{The domain question for the
Nielsen--Soelberg group rings:\\ the commutative case, with certified ball
checks}
\author{Moe Tabei}
\address{Independent researcher, Japan}
\email{tabei@ryun.jp}
\date{\today}
\subjclass[2020]{Primary 16S34, 20C07; Secondary 20F60, 68R05}
\keywords{Kaplansky zero-divisor conjecture, unique product property,
Nielsen--Soelberg groups, elementary amenable groups, DRAT certificates}

\begin{document}
\begin{abstract}
Nielsen and Soelberg exhibited three torsion-free groups $\GOne$, $\GTwo$,
$\GThree$ carrying $8$-element sets without unique products, and asked
whether any of the group rings $R[G_i]$, $R$ a domain, is a domain. We
record that for every \emph{commutative} domain $R$ the answer is
affirmative for all three groups: each $G_i$ is virtually nilpotent, so the
theorem of Kropholler, Linnell and Moody applies over every field, in every
characteristic, and commutative coefficients reduce to the fraction field.
Every ingredient is in the literature except the nilpotent structure of the
relevant finite-index subgroup of $\GTwo$, which is supplied by the certified
computational model of our companion paper. The question therefore remains
open exactly for noncommutative coefficient domains, where the
unique-product mechanism --- the only known ring-independent one --- is
precisely what these groups are constructed to lack. As a complement we
report machine-checkable, DRAT-certified verifications that $\F_2[G_i]$ has
no zero divisors with both supports in explicit balls of the defining
generating sets, obtained by propositional reasoning alone, independent of
the $K$-theoretic machinery; we state precisely what these certificates do
and do not add.
\end{abstract}
\maketitle

\section{Introduction}\label{sec:intro}

Kaplansky's zero-divisor problem asks whether the group ring of a
torsion-free group over a field has nonzero zero-divisors; in its strong
form, popularized in \cite{NS}, the coefficients are allowed to be an
arbitrary domain $R$. Nielsen and Soelberg \cite{NS} proved that a finite
subset $A$ of a torsion-free group such that $A\cdot A$ has no unique
product satisfies $|A|\ge 8$, and constructed three torsion-free groups
witnessing small non-unique-product configurations: two groups $\GOne$,
$\GTwo$ attaining the bound $|A|=8$, and a ``universal'' group $\GThree$
carrying a two-sided pair. Since unique products are the classical
combinatorial mechanism behind all coefficient-independent results on the
zero-divisor problem, these groups are natural stress tests, and \cite{NS}
ends with:

\begin{quote}
``We leave it as an open question whether or not any of the three group
rings $R[G_i]$, for $i\in\{1,2,3\}$, is a domain.''
\end{quote}

The purpose of this short note is to record an answer to the commutative
half of this question, which we could not find in the literature.

\begin{theorem}\label{thm:main}
Let $i\in\{1,2,3\}$ and let $R$ be a commutative domain. Then $R[G_i]$ is a
domain. In particular $K[G_i]$ is a domain for every field $K$, of any
characteristic.
\end{theorem}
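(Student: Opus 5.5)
The plan is to reduce first from commutative domains to fields, then from arbitrary torsion-free groups in this family to elementary amenable ones, and finally to invoke Kropholler--Linnell--Moody.

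\emph{Reduction to fields.} Let $R$ be a commutative domain with fraction field $K$. Then $R[G_i]$ embeds as a subring of $K[G_i]$ by extending coefficients. A subring of a domain is a domain. So it suffices to show that $K[G_i]$ is a domain for every field $K$.

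\emph{Kropholler--Linnell--Moody.} The key input is their theorem: if $G$ is torsion-free and elementary amenable, then $K[G]$ is a domain for every field $K$, in every characteristic. This holds in particular when $G$ is torsion-free and virtually solvable. Torsion-freeness of each $G_i$ is part of the Nielsen--Soelberg construction \cite{NS}. So the whole proof comes down to showing that each $G_i$ is virtually nilpotent, hence virtually solvable and elementary amenable.

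\emph{Structure of each group.} For each $i$ I will take the explicit presentation from \cite{NS} and exhibit a finite-index subgroup $H_i$ that is nilpotent.
\begin{itemize}
\item For $\GOne$ and $\GThree$, I expect the groups to be given as (or shown in \cite{NS} to be) extensions of a free abelian group by a finite group. One can then identify a normal abelian or Heisenberg-type subgroup of finite index directly from the relations, as in the classical Promislow-type examples.
\item For $\GTwo$ this is where the main obstacle lies. I would first find a finite quotient, for instance by mapping the generators to a small permutation or matrix group consistent with the relations. Then I would take its kernel $H_2$ and compute a presentation by Reidemeister--Schreier. Finally I would verify that $H_2$ is nilpotent, by checking that a sufficiently long iterated commutator of generators vanishes, or by building a normal-form model as a polycyclic presentation.
\end{itemize}
Following the abstract, this nilpotent structure is supplied by the certified computational model of the companion paper, so I would cite that result for $H_2$.

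With all three $G_i$ virtually nilpotent and torsion-free, Kropholler--Linnell--Moody gives that $K[G_i]$ is a domain for all fields $K$. The reduction above then yields the statement for every commutative domain $R$.
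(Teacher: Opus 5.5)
Your proposal follows the paper's proof: pass to $K=\operatorname{Frac}(R)$, then apply Kropholler--Linnell--Moody to the torsion-free, virtually nilpotent (hence elementary amenable) groups $G_i$, citing the companion's certified model for the class-$2$ index-$4$ subgroup of $G_2$. The one correction concerns your expectation for $G_1$ and $G_3$: neither is free-abelian-by-finite in the Promislow manner (both are virtually class-$2$ nilpotent but not virtually abelian), and the subgroups you need are already in the literature --- the index-$32$ subgroup $H$ of \cite[(3.2)]{NS} for $G_1$, and for $G_3$ the index-$8$ Heisenberg subgroup of \cite[Theorem 3.1]{Soelberg} or $\langle x^2,y^2\rangle$ of index $16$ in \cite[\S4]{GardamC} --- so your ``Heisenberg-type'' alternative is the case that applies, and you should cite these results rather than leave them as expectations.
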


Theorem~\ref{thm:main} is an \emph{observation}, not a new theorem: each
$G_i$ is torsion-free (proved in \cite{NS}) and virtually nilpotent
(Section~\ref{sec:proof}), hence elementary amenable, and the zero-divisor
conjecture holds for torsion-free elementary amenable groups over every
field by the theorem of Kropholler, Linnell and Moody \cite[Theorem
1.4]{KLM}; commutative domains embed in their fraction fields. We claim no
originality beyond the assembly, together with one small ingredient: the
class-$2$ nilpotent structure of the relevant index-$4$ subgroup of
$\GTwo$, which is not stated in \cite{NS} and which we take from the
certified computational model of the companion paper \cite{companion}. The
survey portion of \cite{NS} cites the characteristic-zero results of Brown
\cite{Brown} and Farkas--Snider \cite{FS} and Formanek's theorem for
supersolvable groups \cite{Formanek}, but not \cite{KLM}, which is what
closes positive characteristic here.

What Theorem~\ref{thm:main} does \emph{not} settle is the case of
noncommutative coefficient domains, and we want to stress that this
residual case is exactly where the Nielsen--Soelberg groups have bite. For
a torsion-free group with the unique product property, $R[G]$ is a domain
for \emph{every} domain $R$, commutative or not, by the classical
leading-term argument (see Strojnowski \cite{Strojnowski}, and
\cite{Oinert} for a modern treatment in the generality of group-graded
rings). That argument is coefficient-independent, and it is the only known
one that is; the groups $G_i$ were constructed precisely so that it cannot
apply. Thus, after Theorem~\ref{thm:main}, the question of \cite{NS}
becomes: \emph{does the domain property of $R[G_i]$ survive the loss of
unique products when $R$ is a noncommutative domain?} --- a question on
which, to our knowledge, nothing is known beyond the specific-support
computation reported in \cite{NS,Soelberg}, and which is equally open for
the Promislow group $\P$ (for $\P$ with integer coefficients see the remark
of Carter \cite{Carter} reported in \cite{NS}).

Section~\ref{sec:certs} complements the abstract argument with
machine-checkable certificates: DRAT-verified proofs that $\F_2[G_i]$ has
no zero divisors with both supports in explicit balls of the defining
generating sets. Over fields these are strictly weaker than
Theorem~\ref{thm:main}; their value is that they are \emph{independent} of
the $K$-theoretic machinery --- they use only a certified faithful model of
the group and propositional reasoning, and every step is checkable by a
simple verifier --- and that they quantify, in solver effort, how
differently the three group rings behave at equal ball sizes. We state
their scope honestly, including the radii we could not reach.

\section{Proof of Theorem~\ref{thm:main}}\label{sec:proof}

We collect the structural facts with their sources.

\begin{lemma}\label{lem:vn}
Each of $\GOne$, $\GTwo$, $\GThree$ is torsion-free and virtually
nilpotent.
\end{lemma}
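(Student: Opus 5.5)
Torsion-freeness I will not reprove: it is established in \cite{NS} for each of the three groups, so the work is entirely in virtual nilpotence. The groups of \cite{NS} are given by small presentations on two or three generators, of the flavour of the Promislow/Hantzsche--Wendt group. My plan is to exhibit, for each $G_i$, an explicit finite-index subgroup $N_i$ together with a homomorphism $G_i \to Q_i$ onto a finite group with kernel $N_i$, and then show that $N_i$ is nilpotent.

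The steps, in order:
\begin{enumerate}
\item For each $G_i$, find a finite quotient $Q_i$ in which the defining relators are easily checked to hold. I expect these to be small $2$-groups, e.g.\ obtained by reducing the generators modulo squares. Take $N_i=\ker(G_i\to Q_i)$ and compute Schreier generators for $N_i$ via Reidemeister--Schreier.
\item For $\GOne$ and $\GThree$, I expect $N_i$ to be free abelian of finite rank, as for crystallographic groups like $\P$. This means checking that the Schreier generators pairwise commute, using the relators.
\item For $\GTwo$, use the index-$4$ subgroup identified in the introduction. Show that it is nilpotent of class $2$: commutators of its generators are central, and all commutators of weight $3$ vanish.
\end{enumerate}
Finally, a group with a nilpotent subgroup of finite index is by definition virtually nilpotent, and this finishes the lemma.

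The main obstacle is the $\GTwo$ case. Proving that commutators are central and that the weight-$3$ commutators vanish requires the word problem in the subgroup, not just a presentation. Here I will not rederive this by hand. Instead I will invoke the certified faithful model from \cite{companion}: a concrete representation of $\GTwo$ (for instance by integer affine or unipotent matrices) that is proved faithful. In that model the index-$4$ subgroup acts by upper unitriangular matrices of size $3$, or lies in an extension of $\Z^k$ by a central $\Z$. Class-$2$ nilpotence then holds inside the matrix group, and faithfulness transfers it back to $\GTwo$.

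If a direct argument is preferred for $\GOne$ and $\GThree$, the same strategy applies: embed them as crystallographic (affine) groups whose translation lattice is the abelian normal subgroup. Their virtual abelianness is then immediate from Bieberbach-type structure.
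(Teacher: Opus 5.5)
There is a genuine gap in your step~2 and in your closing paragraph. You expect the finite-index subgroups of $\GOne$ and $\GThree$ to be free abelian, and the groups to be crystallographic. In fact neither group is even virtually abelian, so the commutation check on Schreier generators fails for every finite quotient $Q_i$ you could choose.

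For $\GThree$ this is in the literature. The index-$8$ normal subgroup of \cite[Theorem 3.1]{Soelberg} is the integral Heisenberg group of step $8$. Gardam \cite[\S4]{GardamC} exhibits $\langle x^2,y^2\rangle$ as an integral Heisenberg subgroup of index $16$ and states explicitly that $\GThree$ is virtually nilpotent but not virtually abelian.

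For $\GOne$, the normal index-$32$ subgroup of \cite[(3.2)]{NS} is $H=\langle h_1,h_2,h_3,h_4 : h_1,h_2\text{ central},\ h_4h_3=h_2^8h_3h_4\rangle$. So $[h_4,h_3]=h_2^8$, with $h_2$ central of infinite order. Any finite-index subgroup of $\GOne$ meets $H$ in a subgroup containing some $h_3^m,h_4^n$ with $m,n\ge1$, and $[h_4^n,h_3^m]=h_2^{8mn}\neq1$. Hence no finite-index subgroup of $\GOne$ is abelian.

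The Bieberbach fallback is therefore impossible, not merely unproved: Bieberbach's theorem makes every crystallographic group virtually $\Z^n$. The correct analogue is the almost-crystallographic setting. There the finite-index subgroup is a lattice in a nilpotent (here Heisenberg-type) Lie group, and what one gets is virtual nilpotence, not virtual abelianness.

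The repair is to aim for class-$2$ nilpotence in all three cases, as you already do for $\GTwo$. This is what the paper does, citing the literature rather than redoing Reidemeister--Schreier:
\begin{itemize}
\item For $\GOne$, class $2$ is read off the presentation of $H$ above. We have $H\cong\Z\times{}$(a torsion-free class-$2$ group), since every commutator of generators is central.
\item For $\GThree$, the paper uses the Heisenberg subgroup of Soelberg and Gardam.
\end{itemize}

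Your $\GTwo$ step agrees with the paper's in substance, but you should not guess at the internals of the model of \cite{companion} (unitriangular matrices, central extensions of $\Z^k$). What is actually used is the following. The certified model shows that $H=\langle a_1^2,a_3^2,a_6^2,a_1a_3a_6^{-1}\rangle$ is normal of index $4$ in $\GTwo$. On $x_1=a_1^2$, $x_2=a_3^2$, $x_3=a_6^2$, $x_4=a_1a_3a_6^{-1}$, the relations ``$x_1,x_2$ central, $[x_4,x_3]=x_1^2x_2^{-2}$'' hold. These are derived by modified Todd--Coxeter rewriting and verified in both directions. You need both the index statement and these relations. Class $2$ then follows at once, because every commutator of generators is central.
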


\begin{proof}
\emph{Torsion-freeness} of all three groups is proved in \cite{NS}:
Section~3 for $\GOne$ (via a finite-index subgroup with an explicit
presentation, an argument the authors attribute to an idea of Derek Holt)
and for $\GTwo$, and Section~4 for $\GThree$; for $\GThree$ see also
Soelberg's thesis \cite[Theorem 3.1]{Soelberg} and, independently and by a
structural argument (an amalgam of two Klein bottle groups over $\Z^2$),
Gardam \cite[\S4]{GardamC}.

\emph{Virtual nilpotency.} For $\GOne$, the subgroup $H\le\GOne$ of
\cite[(3.2)]{NS} is normal of index $32$ and is presented there as
$\langle h_1,h_2,h_3,h_4 : h_1,h_2 \text{ central},\
h_4h_3=h_2^8h_3h_4\rangle$; as noted in \cite{NS}, $H$ is the direct
product of the infinite cyclic group $\langle h_1\rangle$ and a
torsion-free class-$2$ nilpotent group, so $H$ is nilpotent of class $2$.
For $\GThree$, the index-$8$ normal subgroup identified in
\cite[Theorem 3.1]{Soelberg} is the integral Heisenberg group of step $8$
(class-$2$ nilpotent of Hirsch length $3$); Gardam \cite[\S4]{GardamC}
independently exhibits $\langle x^2,y^2\rangle$ as an integral Heisenberg
subgroup of index $16$ and observes that $\GThree$ is virtually nilpotent
but not virtually abelian. For $\GTwo$, \cite{NS} introduces the subgroup
$H=\langle a_1^2,\,a_3^2,\,a_6^2,\,a_1a_3a_6^{-1}\rangle$ but does not
record its structure; the certified model of \cite[\S2]{companion} shows
that $H$ is normal of index $4$ in $\GTwo$ and, on the generators
$x_1=a_1^2$, $x_2=a_3^2$, $x_3=a_6^2$, $x_4=a_1a_3a_6^{-1}$, satisfies:
$x_1,x_2$ are central in $H$ and $[x_4,x_3]=x_1^2x_2^{-2}$, these relations
being derived mechanically (modified Todd--Coxeter rewriting) and verified
in both directions. Hence $H$ is class-$2$ nilpotent, and it is
torsion-free as a subgroup of $\GTwo$. In all three cases $G_i$ contains a
finitely generated torsion-free nilpotent subgroup of finite index.
\end{proof}

\begin{proof}[Proof of Theorem~\ref{thm:main}]
A virtually nilpotent group is solvable-by-finite, hence elementary
amenable. By Lemma~\ref{lem:vn} and the theorem of Kropholler, Linnell and
Moody \cite[Theorem 1.4]{KLM}, $K[G_i]$ is a domain for every field $K$ ---
in particular the theorem carries no characteristic restriction, which is
the point here; characteristic zero alone was already available from
\cite{Brown,FS} since the $G_i$ are polycyclic-by-finite. If $R$ is a
commutative domain with fraction field $K=\operatorname{Frac}(R)$, then
$R[G_i]\subseteq K[G_i]$ and $\operatorname{char}K\in\{0,p\}$ is arbitrary,
so $R[G_i]$ is a subring of a domain.
\end{proof}

\begin{remark}
Formanek's theorem \cite{Formanek} covers supersolvable groups in every
characteristic and is the standard reference for the Promislow group $\P$;
we do not know whether the $G_i$ are supersolvable, and do not need to:
elementary amenability suffices. We also note that
Corollary~1.3 of \cite{NS} ($\alpha^2=0$, $R$ a domain $\Rightarrow$
$|\operatorname{supp}\alpha|\ge 8$) is unconditional in $R$ and is not
superseded by Theorem~\ref{thm:main}, which says nothing about
noncommutative~$R$.
\end{remark}

\section{What remains of the question}\label{sec:noncomm}

Let $R$ be a noncommutative domain. If $G$ is a torsion-free group with the
unique product property, the leading-term argument shows $R[G]$ is a domain
(\cite{Strojnowski}; see \cite[Theorem 3.4]{Oinert} for the graded-ring
generality); no commutativity of $R$ is used. For the Nielsen--Soelberg
groups this argument is unavailable by design, and Theorem~\ref{thm:main}'s
reduction to a field is equally unavailable, since $R$ need not embed in
one. The residual question is therefore a genuine test of whether
domain-ness of group rings can be forced by group structure alone --- here,
virtual nilpotency --- when both classical mechanisms (unique products, and
embedding the coefficients in a field covered by \cite{KLM}) are switched
off. We know of no technique that addresses it, and record it explicitly.

\begin{question}\label{q:noncomm}
Let $R$ be a (noncommutative) domain and $i\in\{1,2,3\}$. Is $R[G_i]$ a
domain? The same question for the Promislow group $\P$ appears to be
equally open.
\end{question}

\section{Certified ball checks over \texorpdfstring{$\F_2$}{F2}}
\label{sec:certs}

Independently of Section~\ref{sec:proof} --- in particular, using no
$K$-theory and no amenability --- we verified the following finite
statements with machine-checkable certificates. Over $\F_2$ an element of
$\F_2[G]$ is determined by its support, and for nonzero $\alpha,\beta$ with
supports $A,B$,
\[
\alpha\beta=0 \iff
\begin{minipage}[c]{0.62\linewidth}
every $g\in G$ has even multiplicity in the multiset
$\{ab : a\in A,\ b\in B\}$.
\end{minipage}
\]
We encode exactly this parity condition over a ball $\Ball{r}$, with the
sole side conditions $A\ne\emptyset\ne B$; a singleton support forces some
multiplicity $1$, so singletons are excluded by the parity clauses
themselves and no cardinality cut is needed. In particular the statement
certified is self-contained --- it does not use the support bounds of
\cite[Theorem 1.4]{NS} or any other external lemma. The group
multiplication is that of the certified faithful models of
\cite{companion,companionP}: for $\GOne$ and $\GThree$, balls are word
balls in the generating set $\{x^{\pm1},y^{\pm1}\}$ of the defining
presentations \cite[(3.1), \S4]{NS}; for $\GTwo$, in the $16$-element
symmetric set $\{a_1^{\pm1},\dots,a_8^{\pm1}\}$ of defining generators.
Instances were solved by \textsf{kissat} emitting DRAT proofs, each proof
checked by \textsf{drat-trim} \cite{dratTrim} and deleted after
verification (all are regenerable by the published scripts).

\begin{theorem}\label{thm:balls}
With balls as above, $\F_2[G_i]$ contains no pair of nonzero elements
$\alpha,\beta$ with $\alpha\beta=0$ and
$\operatorname{supp}\alpha,\operatorname{supp}\beta\subseteq\Ball{r}$, for
$(G_i,r)$ in the certified range: $\GOne$, $r\le4$; $\GTwo$, $r\le2$;
$\GThree$, $r\le4$. Each instance carries a DRAT certificate verified by
\textsf{drat-trim}.
\end{theorem}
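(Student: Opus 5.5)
Theorem~\ref{thm:balls} is a finite, computational statement, so the plan is to reduce each instance $(G_i,r)$ to the unsatisfiability of an explicit propositional formula and certify that unsatisfiability.

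\emph{Step 1: enumerate the ball and its products.} Using the certified faithful model of \cite{companion,companionP}, I enumerate $\Ball{r}$ as a list of distinct normal forms. Faithfulness is what guarantees that distinct model elements are distinct group elements. I then tabulate, for every pair $(a,b)\in\Ball{r}\times\Ball{r}$, the normal form of $ab$, and collect the finite set $S=\Ball{r}\cdot\Ball{r}\subseteq\Ball{2r}$. For every $g\in S$ I record the list $P_g=\{(a,b): ab=g\}$.

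\emph{Step 2: encode the parity condition.} I introduce Boolean variables $x_a$ and $y_b$ for $a,b\in\Ball{r}$, with $x_a$ meaning $a\in\operatorname{supp}\alpha$ and $y_b$ meaning $b\in\operatorname{supp}\beta$. For each pair $(a,b)$ I add an auxiliary variable $z_{ab}\leftrightarrow x_a\wedge y_b$, written as three clauses. For each $g\in S$ I impose the XOR constraint $\bigoplus_{(a,b)\in P_g} z_{ab}=0$, converted to CNF by chaining fresh Tseitin variables so that clause width stays bounded. Finally I add the two clauses $\bigvee_a x_a$ and $\bigvee_b y_b$.

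By the displayed equivalence preceding the theorem, a satisfying assignment corresponds exactly to a pair of nonzero $\alpha,\beta$ with supports in $\Ball{r}$ and $\alpha\beta=0$. Soundness of this direction is all we need, and it is immediate because every product $ab$ with $a,b\in\Ball{r}$ lies in $S$, so no multiplicity condition is omitted. It follows that unsatisfiability of the formula implies the theorem for that instance.

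\emph{Step 3: solve and certify.} I run \textsf{kissat} with DRAT proof output on each instance, and check each proof with \textsf{drat-trim} \cite{dratTrim} against the exact CNF file produced in Step 2. The trusted base then consists of only three things: the faithful model, the short encoder, and the checker.

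One could add symmetry breaking, since left translation preserves the property: replacing $\alpha$ by $g\alpha$ preserves $\alpha\beta=0$. However, translation moves supports out of the ball, so I would avoid such clauses unless they are provably sound on $\Ball{r}$, or else justify them separately.

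\emph{Main obstacle.} I expect the hardest step to be solver effort on the largest instances, especially $\GTwo$ with $16$ generators, where $|\Ball{2}|$ is already large and the XOR constraints are hard for CDCL. My first remedy would be Gaussian-elimination-friendly encodings, for instance cutting long XORs into short chunks. If that is not enough, I would restrict the radii to the range that actually completes with checkable proofs, which is why the statement fixes $r\le4$, $r\le2$ and $r\le4$. A secondary, more conceptual point is that the correctness of the group multiplication rests on \cite{companion}; the certificate verifies only the propositional step.
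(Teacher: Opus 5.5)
Your plan is correct and is essentially the paper's own: the same parity encoding over $\Ball{r}$ with only the nonemptiness side conditions, multiplication taken from the certified faithful models, and \textsf{kissat} DRAT proofs checked by \textsf{drat-trim}. One small slip in Step~2: ``UNSAT implies the theorem'' needs the direction in which every genuine zero-divisor pair (with the auxiliary variables set to the corresponding conjunctions and partial parities) satisfies every clause, i.e.\ the formula imposes nothing beyond parity and nonemptiness --- exactly the concern behind your caution about symmetry breaking --- whereas ``no multiplicity condition is omitted'' justifies the converse, which matters only when the solver returns SAT, as in the paper's $\Z/6$ control.
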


\begin{center}
\begin{tabular}{lccccc}
\toprule
group & $r$ & $|\Ball{r}|$ & result & solve time & proof size\\
\midrule
$\GOne$ & $2$ & $17$ & UNSAT, verified & $<1$\,s & $<0.1$\,MB\\
$\GOne$ & $3$ & $53$ & UNSAT, verified & $<1$\,s & $1.5$\,MB\\
$\GOne$ & $4$ & $153$ & UNSAT, verified & $24$\,s & $106.4$\,MB\\
$\GTwo$ & $1$ & $17$ & UNSAT, verified & $<1$\,s & $0.1$\,MB\\
$\GTwo$ & $2$ & $133$ & UNSAT, verified & $91$\,s & $200.3$\,MB\\
$\GThree$ & $2$ & $17$ & UNSAT, verified & $<1$\,s & $<0.1$\,MB\\
$\GThree$ & $3$ & $53$ & UNSAT, verified & $1$\,s & $5.7$\,MB\\
$\GThree$ & $4$ & $135$ & UNSAT, verified & $2738$\,s & $1926.1$\,MB\\
\bottomrule
\end{tabular}
\end{center}

\medskip
\noindent\emph{Controls.} The encoder was validated in both directions.
Positive: on the finite group $\Z/6$ (torsion permitted, so zero divisors
exist: $(1+g^3)^2=0$) the same encoder returns SAT, with the witness
$A=\{g^4,g^5\}$, $B=\Z/6$ re-verified solver-free --- the pipeline can find
zero divisors when they exist. Negative: on the Promislow group $\P$, where
$\F_2[\P]$ is a domain by \cite{Formanek}, the instance at $r=3$ is UNSAT
with a \textsf{drat-trim}-verified certificate ($2$\,s, $5.5$\,MB proof),
consistent with the known theorem.

\medskip
\noindent\emph{Boundary, stated honestly.} At radius $5$ for $\GOne$ the
solver reports UNSAT in $17302$\,s but without proof logging (a probe run),
so we do not claim it as certified. $\GThree$ at $r=5$ ($299$ elements) and
$\GTwo$ at $r=3$ ($633$ elements) were abandoned uncertified after
approximately $29$ hours each. The certified range above is the frontier on
our hardware.

\begin{remark}[what the certificates add, and what they do not]
Over fields, Theorem~\ref{thm:balls} is a finite shadow of
Theorem~\ref{thm:main}. It adds two things. First, independence: the
certificates rest only on the faithfulness certificate chain of the models
\cite{companion,companionP} and on propositional steps checkable by a
trivial verifier, so they would survive even a failure in the long chain
behind \cite{KLM}; they are, to our knowledge, the only proof-logged
artifacts about these group rings. Second, calibration: at essentially
equal ball sizes ($153$ vs.\ $135$ elements) the certified difficulty
differs by two orders of magnitude ($24$\,s vs.\ $2738$\,s, $106$\,MB vs.\
$1.9$\,GB proofs) between $\GOne$ and $\GThree$ --- the cost is governed by
group structure, not instance size, which may be of independent interest as
SAT benchmarks. The certificates say nothing about
Question~\ref{q:noncomm}: the parity reduction is specific to $\F_2$.
\end{remark}

\section*{Reproducibility}
The encoder, the DRAT pipeline and the controls
(\texttt{zerodiv\_drat.py}, \texttt{zerodiv\_search.py}) are included as
ancillary files, together with the exact group models they load
(\texttt{g$i$\_model.py} with their multiplication tables, and
\texttt{promislow.py} for the negative control) --- the models are the ones
of the artifact collection of \cite{companion}, where their faithfulness
certificate chains are documented (\cite[\S2]{companion};
for $\P$, \cite{companionP}). Each table row regenerates with
\texttt{python zerodiv\_drat.py G$i$ $r$}; the controls with
\texttt{-{}-sanity} and \texttt{P 3}. Requirements: \textsf{kissat},
\textsf{drat-trim}, and the \textsf{python-sat} package for the CNF
encoding.

\section*{Acknowledgements}
The author thanks the authors of \cite{NS} for a question whose commutative
half turned out to be closable by known results --- and whose noncommutative
half seems genuinely hard.

\end{document}